
\documentclass{amsart}

\usepackage{amsmath,amssymb,amsfonts,amsthm}
\usepackage{color}




\newcommand{\D}{\mathrm {d}}
\def\le{\leqslant}
\def\ge{\geqslant}

\newcommand{\beq}{\begin{equation}}
\newcommand{\eeq}{\end{equation}}

\def\u{u^\eps}
\def\uapp{u_{\rm app}^\eps}
\def\r{r^\eps}
\def\calA{\mathcal A}
\def\calN{\mathcal N}
\def\calM{\mathcal M}

\def\ds{\displaystyle}


\newtheorem{lemma}{Lemma}[section]
\newtheorem{theorem}[lemma]{Theorem}
\newtheorem{proposition}[lemma]{Proposition}

\newtheorem{corollary}[lemma]{Corollary}
\theoremstyle{definition}
\newtheorem{definition}[lemma]{Definition}
\theoremstyle{definition}
\newtheorem{remark}[lemma]{Remark}
\theoremstyle{definition}

\newcommand{\N}{{\mathbb N }}
\newcommand{\R}{{\mathbb R}}
\newcommand{\C}{{\mathbb C}}

\newcommand{\e}{{\varepsilon }}
\newcommand{\eps}{{\varepsilon }}

\def\d{{\partial}}

\def\({\left(}
\def\){\right)}
\def\<{\left\langle}
\def\>{\right\rangle}
\def\O{\mathcal O}

\newcommand{\Id}[1]{{\rm I\kern-2pt I_{#1}}}
\renewcommand{\hbar}{{\displaystyle\bar{\phantom{x}}\kern-6pt h}}

\numberwithin{equation}{section}

\begin{document}


\title[High-frequency averaging in semi-classical Hartree-type equations]
{High-frequency averaging in\\ semi-classical Hartree-type equations} 

\author[J.~Giannoulis]{Johannes Giannoulis}
\author[A.~Mielke]{Alexander Mielke}
\author[C.~Sparber]{Christof Sparber}
\address[J.~Giannoulis]{Zentrum Mathematik, 
Technische Universit\"at M\"unchen\\ Boltz\-mann\-stra{\ss}e 3\\D-85747 Garching b.\ M\"unchen\\ Germany}
\email{giannoulis@ma.tum.de}
\address[A.~Mielke]{Weierstra{\ss}-Institut f\"ur Angewandte Analysis und 
Stochastik\\ Mohrenstra{\ss}e 39\\ 10117 Berlin  
\and Institut f\"ur Mathematik, Humboldt-Uni\-ver\-si\-t\"at zu Berlin\\ 
Rudower Chaussee 25\\12489 Berlin\\ Germany}
\email{mielke@wias-berlin.de}
\address[C. Sparber]{Department of Applied Mathematics and Theoretical
Physics\\ 
CMS, Wilberforce Road\\ Cambridge CB3 0WA\\ England}
\email{c.sparber@damtp.cam.ac.uk}

\begin{abstract} We investigate the asymptotic behavior of solutions
to semi-clas\-si\-cal Schr\"odinger equations with nonlinearities of
Hartree type.  For a weakly nonlinear scaling, we show the validity
of an asymptotic superposition principle for slowly modulated highly
oscillatory pulses.  The result is based on a high-frequency
averaging effect due to the nonlocal nature of the Hartree
potential, which inhibits the creation of new resonant waves.  In
the proof we make use of the framework of Wiener algebras.
\end{abstract}
\subjclass[2000]{35B40, 35C20, 81Q20}

\keywords{Nonlinear Schr\"odinger equation, Hartree nonlinearity, high-frequency asymptotics, WKB approximation}

\thanks{C.S. has been supported by the Royal
Society via his University Research Fellowship. A.M. was partially
supported by DFG via project D14 of {\scshape Matheon}.} 
\maketitle


\section{Introduction and main result}

In this work we are interested in the asymptotic behavior for $0< \e \ll 1$ of
the following nonlinear Schr\"odinger equation
\begin{equation}
\label{nls}
i\e \partial _t {\tt u}^\e =   -\frac{\e^2}{2}\Delta {\tt u}^\e + \left ( K \ast |{\tt u}^\e|^2 \right) {\tt u}^\e,\quad {\tt u}^\e \big |_{t=0}={\tt u}^\e_0,\\
\end{equation}
where $(t,x)\in \R_+\times\R^d$, $d\in \N$. This model describes the
time-evolution of a complex-valued field ${\tt u}^\e(t,x)$, under the
influence of a \emph{Hartree-type nonlinearity} (cf. \cite{SuSu} for a broader introduction) 
Above, $K=K(x)$ denotes a given real-valued interaction kernel, to be specified in
detail below, and $` `\ast"$ denotes the convolution w.r.t. $x\in
\R^d$. The scaling of \eqref{nls} for small $\e >0$ corresponds to the
\emph{semi-classical} regime, i.e. the regime of \emph{high-frequency}
solutions ${\tt u}^\e(t,x)$, which can be approximated via geometric
optics.

The asymptotic behavior of \eqref{nls} as $\e \to 0_+$ has been
studied by several authors, mainly for the case $K (x) = \pm
\frac{1}{|x|}$: For example in \cite{LiPa, MaMa, Z}, Wigner measure
techniques are invoked, which however require mixed states and thus
can not be applied to our situation. In the one-dimensional case, this
constraint can be overcome \cite{ZZM}, but uniqueness of the limiting
Wigner measure for $t>0$ is still open. Turning to multi-scale WKB
expansions, which are typically valid for short times only, a variety
of asymptotic results can be found in \cite{AlCa, CaMa, HaLi, LiTa},
provided that ${\tt u}_0^\e$ is given as a \emph{single-phase} WKB
initial data, i.e
$$
{\tt u}^\e _0(x)= a^\e(x)e^{i \varphi(x)/\e },
$$ 
with given ($\e$-independent) phase $\varphi(x)\in \C$ and amplitude
$a^\e(x)\in \R$,  such that, asymptotically, $a^\e \sim a_0 + \e a_1+
\e^2a_2+\dots$.  

In the present work we are interested in generalizing these studies to the case of 
(a superposition of) \emph{several WKB waves}. Due to the expected nonlinear interaction between high-frequency waves (i.e. the appearance of resonances), 
this problem is notoriously difficult, even on a formal level. 
We shall therefore simplify the situation considerably by turning our attention to the 
case of (asymptotically) \emph{small initial data} corresponding to \emph{modulated plane-waves}. More precisely, we consider ${\tt u}_0^\e(x)= \e^{\alpha/2} u_0^\e(x)$, where $\alpha \ge 1$ and 
$u_0^\e(x)$ is given by 
a superposition of $\e$-oscillatory plane-waves, i.e. 
\begin{equation}\label{data}
u^\e_0(x) = \sum_{j=1}^J a_j(x) e^{i k_j \cdot x / \e }  ,\quad k_j \in \R^d,
\end{equation}
with amplitudes $a_j(x)\in \C$, $j\in  \{1, \dots, J \} \subseteq \N$. Here, and in the
following, we shall assume for the sake of simplicity that the initial
amplitudes $a_j$ do not depend on $\e$, since we shall only be
interested in the leading order asymptotics. A generalization to
amplitudes admitting an asymptotic expansion in $\e$ is then
straightforward.  Without restriction of generality we shall also
assume that
$$
k_j\not = k_\ell \ \text{for all} \  
j\not = \ell \in \Gamma:= \{1, \dots, J \} \subseteq \N, 
$$
where $J\in \N \cup \{\infty\}$ and $\{1,\dots,\infty\}$ means
simply $\N$. 

By rescaling $u^\e= \e^{-\alpha/2} {\tt u}^\e$, we can rewrite the
considered model into
\begin{equation}
\label{hartree}
i\e \partial _t {u}^\e =   -\frac{\e^2}{2}\Delta {u}^\e + \e^\alpha \left ( K
\ast |{ u}^\e|^2 \right) {u}^\e,\quad { u}^\e \big |_{t=0}={u}^\e_0,
\end{equation}
where the initial data is now of order one but the equation displays
an (asymptotically) small nonlinearity. We shall from now on take on
this point of view since it looks more natural from the physical point
of view. In addition, it is well known (see e.g. \cite{Ca1, Ca2}) that
the choice $\alpha = 1$ for \eqref{hartree} is \emph{critical} as far
as semi-classical behavior is concerned (see Section \ref{sec:approx}
for more details). We shall therefore pay particular attention to this
case.  We remark that the same asymptotic scaling has been used in
\cite{CDSp, CMS} for Schr\"odinger equations with (gauge invariant)
power-law nonlinearities $\propto |u|^{2\sigma}u$, $\sigma \in \N$. In
particular, in \cite{CDSp} the problem of high-frequency wave
interaction is exhaustively studied in the case $\sigma = 1$ for which
a geometric description of all possible nonlinear resonances is given.

However, for the considered case of Hartree type nonlinearities, the
situation is very different, due to the \emph{nonlocal nature} of
$K\ast |u^\e|^2$.  Since we expect the solution $u^\e(t,x)$ to be
given asymptotically as a superposition of highly oscillatory waves,
we clearly can not regard the Hartree term to be slowly varying (as in
the case of a single wave). The notion of a resonance though, seems to
be not clearly defined in this case. A more sophisticated analysis of
the oscillatory structure of $K\ast |u^\e|^2 $ is needed therefore. As
we shall see, the nonlocal nature of the Hartree potential yields a
kind of \emph{averaging effect}. In particular \emph{no new} highly
oscillatory phases are created in leading order (via resonances), in
sharp contrast to the situation for local nonlinearities.

In order to be more precise, we need to introduce some notation: 
The Fourier transform of $f\in L^1\cap L^2(\R^d)$ will be denoted as
\begin{equation*}
(\mathcal F f)(\xi)\equiv \widehat f(\xi)=\frac{1}{(2\pi)^{d/2}}\int_{\R^d}f(x)e^{-ix\cdot \xi} dx.
\end{equation*}
Our analytical approach will be based on the use of the \emph{Wiener Algebra} (see Section \ref{sec:wiener} for more details). Within this framework it turns out 
that the natural space for the amplitudes $a=(a_j)_{j\in \Gamma}$ is given as follows.
\begin{definition}\label{Aspace}
\begin{equation*}
\mathcal A (\R^d) := \{ a=(a_j)_{j\in\Gamma} : (\widehat{a}_j)_{j\in\Gamma}\in
\ell^1(\Gamma;L^1(\R^d)) \},
\end{equation*}
$\Gamma= \{1, \dots, J \} \subseteq \N$,  
equipped with the norm
\begin{equation*}
\|a\|_{\mathcal A (\R^d)} = \sum_{j=1}^J\|\widehat{a}_j\|_{L^1(\R^d)}.
\end{equation*}
\end{definition}
We are now in the position to state the main theorem of this work.
\begin{theorem}\label{theorem} 
For $d\ge 1$, consider the Cauchy problem \eqref{hartree} with
$\alpha \ge 1$, subject to initial data $u^\e_0$ of the form
\eqref{data}, where the initial amplitudes $a_j\in L^2(\R^d)\cap
\calA(\R^d)$ are such that $(\partial^p_x
a_j)_{j\in\Gamma}\in\calA(\R^d)$ for all $|p|\le 2$. In addition assume 
\begin{equation}\label{small} 
|\Lambda|_\infty^{-1}:=\inf\{|k_\ell-k_m|:\ \ell,m\in\Gamma,\ \ell\neq m\}>0,
\end{equation}
and let the interaction kernel $K$ satisfy $(1 + |\xi|) \widehat 
K (\xi) \in L^\infty(\R^d)$.

Then, for all $T>0$ there exists $C,\e_0>0$, such 
that for any $\e\in(0,\e_0)$, the exact
solution to \eqref{hartree} can be approximated by
\begin{equation}\label{errorestimate}
\left\lVert \u-\uapp\right\rVert_{L^\infty([0,T]\times \R^d)}\le
C\e^\beta
\quad\text{with}\ \beta  =
\begin{cases} 1 & \text{if $\alpha\in\{1\}\cup[2,\infty)$,}\\
\alpha{-}1 & \text{if $\alpha\in(1,2)$.}
\end{cases} 
\end{equation} 
Here the approximate solution $u_{\rm app}^\eps$ is given by
\begin{equation}\label{approx}
u_{\rm app}^\e(t,x) = \sum_{j=1}^J a_j(x-t k_j)e^{i S_j(t,x)} e^{i k_j \cdot x/\e  - i t |k_j|^2 / (2\e) }  ,
\end{equation}
where $S_j(t,x)\in \R$ is defined in \eqref{S} if $\alpha=1$ and 
$S_j(t,x)=0$ if $\alpha >1$, respectively.
\end{theorem}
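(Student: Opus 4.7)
The plan is to construct $\uapp$ via a WKB ansatz whose phase corrections $S_j$ are chosen precisely so that the residual $\r := L^\eps\uapp - \eps^\alpha(K\ast|\uapp|^2)\uapp$, where $L^\eps := \I\eps\d_t + \tfrac{\eps^2}{2}\Delta$, is of order $\eps^{\beta+1}$ in the Wiener algebra norm, and then to propagate this bound to $\u - \uapp$ via Duhamel together with a Gronwall bootstrap. The whole analysis lives in the Wiener algebra $W(\R^d) := \calF^{-1}(L^1(\R^d))$ with $\|f\|_W := \|\widehat f\|_{L^1}$, of which $\calA(\R^d)$ is the natural $\ell^1$-sum indexed by modes. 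The three essential properties are: $W$ is a Banach algebra continuously embedded in $L^\infty$; the free propagator $U^\eps(t) := e^{\I t\eps\Delta/2}$ is an isometry on $W$, since its Fourier symbol is unimodular; and $\|K\ast f\|_W \le (2\pi)^{d/2}\|\widehat K\|_\infty\|f\|_W$, which is finite under the hypothesis $(1+|\xi|)\widehat K(\xi)\in L^\infty$.

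Setting $\theta_j(t,x) := k_j\cdot x - t|k_j|^2/2$ and $A_j(t,x) := a_j(x-tk_j)e^{\I S_j(t,x)}$ and substituting $\uapp = \sum_j A_j e^{\I\theta_j/\eps}$ into $L^\eps$, one exploits $(\d_t+k_j\cdot\nabla)[a_j(x-tk_j)] = 0$ to obtain, per mode,
\[
L^\eps\bigl(A_j e^{\I\theta_j/\eps}\bigr) = \Bigl[-\eps(\d_t+k_j\cdot\nabla)S_j\,A_j + \tfrac{\eps^2}{2}\Delta A_j\Bigr]e^{\I\theta_j/\eps}.
\]
The heart of the proof is the \emph{averaging estimate} for the Hartree term. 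Expanding
\[
|\uapp|^2 = \sum_j |A_j|^2 + \sum_{j\neq\ell}A_j\ol{A_\ell}\,e^{\I(k_j-k_\ell)\cdot x/\eps}\,e^{-\I t(|k_j|^2-|k_\ell|^2)/(2\eps)},
\]
the Fourier transform of each $(j,\ell)$-term with $j\neq\ell$ is centered at $(k_j-k_\ell)/\eps$, so convolving with $K$ introduces the factor $\widehat K$ evaluated there. Combining $|\widehat K(\xi)|\le C/(1+|\xi|)$ with the mode separation $|k_j-k_\ell|\ge|\Lambda|_\infty^{-1}>0$, and splitting the resulting translated Fourier integral at $|\eta| = |k_j-k_\ell|/(2\eps)$ while using the second-order regularity $(\d^p a_j)_{|p|\le 2}\in\calA$ to control the tail, gives an $O(\eps)$ bound in $W$ for the off-diagonal contribution, summable in $(j,\ell)\in\Gamma^2$. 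Hence $K\ast|\uapp|^2 = V_{\rm eff}(t,x) + O_W(\eps)$ with the slowly-varying effective potential
\[
V_{\rm eff}(t,x) := \sum_j \bigl(K\ast|a_j(\cdot-tk_j)|^2\bigr)(x).
\]

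Matching orders then fixes the phase corrections. For $\alpha=1$ one cancels $\eps V_{\rm eff}A_j$ against $-\eps(\d_t+k_j\cdot\nabla)S_j\,A_j$, yielding the transport equation $(\d_t+k_j\cdot\nabla)S_j = -V_{\rm eff}$ with $S_j(0,\cdot)=0$ and hence the characteristic formula for $S_j$ announced in \eqref{S}; for $\alpha>1$ the same slow term is subleading and one simply sets $S_j\equiv 0$. A direct computation then bounds $\r$ as the sum of the dispersive error $\tfrac{\eps^2}{2}\Delta A_j$ (size $\eps^2$), the off-diagonal Hartree piece (size $\eps^{\alpha+1}$), and, only for $\alpha\in(1,2)$, the uncompensated diagonal slow term $\eps^\alpha V_{\rm eff}\uapp$ (size $\eps^\alpha$), giving $\|\r(t)\|_W \le C\eps^{\beta+1}$ uniformly on $[0,T]$ with $\beta$ as in \eqref{errorestimate}. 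Since $\u(0)=\uapp(0)$, Duhamel yields
\[
\u(t)-\uapp(t) = -\frac{\I}{\eps}\int_0^t U^\eps(t-s)\Bigl[\eps^\alpha\bigl((K\ast|\u|^2)\u - (K\ast|\uapp|^2)\uapp\bigr)(s) - \r(s)\Bigr]\dd s,
\]
and combining the $W$-isometry of $U^\eps$, the trilinear Lipschitz bound for the Hartree map in $W$, uniform $W$-bounds on $\uapp$ provided by $\|e^{\I S_j}\|_W \le e^{\|S_j\|_W}$, and a bootstrap that keeps $\|\u(s)\|_W$ within a fixed constant of $\|\uapp(s)\|_W$ on $[0,T]$, a Gronwall argument gives $\|\u(t)-\uapp(t)\|_W \le C\eps^\beta$, whence the stated $L^\infty_{t,x}$ bound via $W\hookrightarrow L^\infty$.

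The main obstacle is the averaging estimate itself. One has to exploit the decay of $\widehat K$ only at infinity, while the shifted argument $\eta+(k_j-k_\ell)/\eps$ remains large only on the range $|\eta|\le|k_j-k_\ell|/(2\eps)$, so a sharp splitting together with the second-derivative regularity of the $a_j$ is needed to absorb the tail; the resulting bound must then be summable over modes $(j,\ell)$, which is where the $\ell^1$-structure built into $\calA$ becomes essential. Everything else, including the residual bookkeeping and the Gronwall bootstrap, is a standard application of the Wiener-algebra machinery once this averaging lemma is in place.
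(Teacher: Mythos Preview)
Your proposal is correct and follows essentially the same architecture as the paper: WKB ansatz with plane-wave fast phases, transport equations for the slow phase corrections $S_j$, an averaging estimate showing the off-diagonal Hartree contribution is $O(\eps)$ in $W$, residual bookkeeping, and a Duhamel--Gronwall bootstrap in the Wiener algebra.

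The one noteworthy technical difference lies in the averaging estimate. The paper gains the factor $\eps$ by a single integration by parts in physical space, using $e^{iy\cdot k/\eps}=-i\eps\,\tfrac{k}{|k|^2}\cdot\nabla_y e^{iy\cdot k/\eps}$ inside the convolution; this moves one derivative onto $K$ (controlled by $\widehat{\nabla K}\in L^\infty$) and one onto $A_\ell\bar A_m$ (controlled by $\nabla A\in\calA$), with no splitting or tail estimate required. Your Fourier-side argument, splitting the translated integral at $|\eta|=|k_j-k_\ell|/(2\eps)$, is the dual of this and equally valid, but note that controlling the tail $\int_{|\eta|>R}|\widehat{A_\ell\bar A_m}|\,d\eta=O(R^{-1})$ needs only \emph{first}-order regularity $\nabla(A_\ell\bar A_m)\in W$, not second order as you state; the second derivatives are used solely to bound the dispersive remainder $\tfrac{\eps^2}{2}\Delta A_j$. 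Otherwise your accounting of the residual sizes and the resulting values of $\beta$ match the paper exactly.
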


In \eqref{approx}, the total number of (highly
oscillatory) phases $J \in \N\cup{\infty}$, is the same as for the
initial data \eqref{data}. Hence, no new phases are created by the
nonlinearity.  Nonlinear effects only show up  in leading order via
\emph{self-modulation of the amplitudes} (provided $\alpha = 1$).
The condition \eqref{small} can be seen as a small
divisor assumption required in the case of infinitely many phases, though, \emph{not of the same type} as the one used in \cite{CDSp}. Obviously, 
\eqref{small} is always satisfied if $J < + \infty$. 

Note that if $\alpha \in \N$, the error of our approximation is at least of the order $\O(\e)$ and thus smaller than the size of the original 
(small amplitude) solution. However, we see from \eqref{errorestimate} that if $\alpha \in (1,2)$ the error becomes worse as $\alpha \to 1_+$. 
This problem can in principle be overcome if one allows the leading order amplitudes to be mildly dependent on $\e$ themselves, cf. Remark \ref{SchubertRemark} below.

Finally, one should note that under the general assumptions of this work, we can not infer global well posedness of equation \eqref{hartree} in the Wiener
space. The usual arguments for proving global existence (see e.g. \cite{SuSu}) involve the
conservation of the $L^2$ norm, which also holds in our case. However,
this is not sufficient to control the nonlinearity $(K *|u|^2)u$ in the
Wiener space. Nevertheless, the above theorem shows
that for initial data $u^\e_0$ in the form \eqref{data}, the solutions
cannot blow up too fast: If $T^\e>0$ denotes the blow-up time, then $T^\e\to
+\infty$ as $\e \to 0_+$.

\begin{remark} A particular example for $K$, satisfying the
 assumptions is given by the one considered in \cite[equation
 (15)]{Be} and more importantly by the so-called \emph{Yukawa
   potential}
$$ K(x) = \pm \frac{e^{-\lambda |x|}}{|x|}, \quad   x \in \R^3, \lambda >0,$$
for which the corresponding Fourier transform is found to be
$$
\widehat K(\xi) = \pm \frac{1}{\lambda^2+|\xi|^2}.
$$
Clearly, $(1 + |\xi|) \widehat K \in L^\infty(\R^3)$ in this case. Note that in the limit $\lambda \to 0$ we obtain the Fourier transform of the Coulomb potential $K(x) = \pm  \frac {1}{|x|}$, which, however, is too singular, 
to directly apply our theorem. It remains an interesting open problem to establish the same result for the Coulomb case in $d=3$.
\end{remark}

The paper is 
organized as follows: In Section \ref{sec:approx}, we formally derive the approximate solution and draw some further conclusions from it. In Section \ref{sec:wiener} we set up 
the Wiener framework for the exact and the approximate solution. Finally, we prove the required estimates on the remainder of the approximation and consequently 
state the proof of Theorem \ref{theorem} in Section \ref{sec:proof}.

\section{Derivation of the approximate solution}\label{sec:approx}

We seek an approximation of solutions to \eqref{hartree} in the form
\begin{equation}\label{ansatz}
u_{\rm app}^\e(t,x) = \sum_{j=1}^J A_j(t,x) e^{i \phi_j(t,x)/ \eps }  .
\end{equation}
Assuming for the moment sufficient smoothness for $A_j$ and $\phi_j$, we can plug this ansatz into \eqref{hartree}, which yields 
\begin{equation}\label{eq:approx}
i \e \partial _t u_{\rm app}^\e  +\frac{\e^2}{2}\Delta  u^\e_{\rm app}  -
\e^\alpha \left ( K \ast |u^\e_{\rm app}|^2 \right) u^\e_{\rm app} = 
\sum_{n=0}^2 \e^n X_n +\e^\alpha(Y+Y_R),
\end{equation}
where we denote
\begin{align}
\notag
X_0 &= -\sum_{j=1}^J e^{i \phi_j/ \e}
\Big(\partial_t \phi_j + \frac{1}{2} |\nabla \phi_j |^2 \Big) A_j ,
\\
\notag
X_1 &= i\sum_{j=1}^J e^{i \phi_j/ \e} 
\Big(\partial_t A_j+\nabla A_j\cdot\nabla\phi_j+\frac12A_j\Delta\phi_j\Big),
\end{align}
and also
\begin{align}
\label{res0}
X_2 &= \frac12\sum_{j=1}^J e^{i \phi_j/ \e} \Delta A_j.
\end{align}
These terms are the same as in the linear case $K\equiv0$. 
Due to the presence of the Hartree type nonlinearity, we also obtain
\begin{align}
\label{resY}
Y   &= -\sum_{j=1}^J e^{i \phi_j/ \e}
\Big(K\ast\sum_{\ell=1}^{J}|A_{\ell}|^2\Big) A_{j} ,\\
\label{res}
Y_R &= -\sum_{j=1}^J e^{i \phi_j/ \e}   
\Big(K\ast\sum_{\ell,m=1 \atop \ell \not = m }^{J} 
\big(A_{\ell} \overline{A}_{m} \, e^{i (\phi_\ell - \phi_m)/ \e}\big)
\Big) A_{j}.
\end{align}
Obviously, $Y_R$ carries high-frequency oscillations, which are not
captured by our ansatz \eqref{ansatz}. Thus we need to develop a more
careful analysis in the following, which shows that $Y_R$ is of
higher order. 

Ignoring this problem for the moment, we consequently aim to eliminate
equal powers of $\e$. Hence, in leading order, we set $X_0 = 0$, which
is equivalent to the \emph{Hamilton-Jacobi equation}
\begin{equation}\label{hj}
\partial_t \phi_j + \frac{1}{2} |\nabla \phi_j |^2 =0, \quad \phi_j
\big |_{t=0}= k_j\cdot x. 
\end{equation}
Solutions to \eqref{hj} determine the characteristic high-frequency
oscillations present in $u_{\rm app}^\e$. In our case, they are easily
found to be
\begin{equation}\label{eq:planewave}
\phi_j(t,x) =k_j\cdot x -\frac{t}{2}|k_j|^2.
\end{equation}
These phases obviously solve \eqref{hj} for all $(t, x)\in \R\times \R^d$, i.e. \emph{no caustics} appear in our study. 

In the next step we set $X_1=0$ if $\alpha>1$ and $X_1+Y=0$ if
$\alpha=1$ (note that we do not include $Y_R$ here).  Comparing
the prefactors of the terms multiplied by $e^{i\phi_j/\e}$, yields
the following system of transport equations for the amplitudes:
\begin{equation}\label{eq:ampsyst}
\partial_t A_{j} + k_j \cdot \nabla  A_{j}    =  
\begin{cases}
0 & \text{ if $ \alpha > 1$,}\\
-i V_{\rm eff}(A)A_{j}  & \text{ if $ \alpha = 1$,} 
\end{cases}
\end{equation}
where we have used the fact that $\Delta \phi_j \equiv 0$, in view of
\eqref{eq:planewave}. For $\alpha = 1$, the effective (nonlinear)
potential $V_{\rm eff} (A)$ is given by
$$
V_{\rm eff} (A):=K \ast  \Big(\sum_{\ell=1}^J |A_{\ell}|^2\Big ).
$$
We see that for $\alpha >1$ no nonlinear effects are present in
transport equations for the leading order amplitudes. The case $\alpha
= 1$ is therefore seen to be critical as far as semi-classical
asymptotics is concerned.
\begin{lemma} \label{lem:repAj} The transport equation
\eqref{eq:ampsyst} with initial data $(a_j)_{j\in\Gamma}\in L^2\cap
\calA(\R^d)$ admits global-in-time solutions $A \in C([0,\infty);
L^2\cap \calA(\R^d))$, which can be written in the form
\begin{equation}\label{amp}
A_j(t,x)= a_j(x-t k_j) e^{i S_j(t,x)},
\end{equation}
where $S_j\equiv 0$ for $\alpha>1$  and $S_j\in C([0,\infty)\times
\R^d)$ for $\alpha=1$ is given by  
\begin{equation}
\label{S}
S_j(t,\cdot)
=-\int_0^t  \Big(K\ast\sum_{\ell=1}^J 
\big|a_\ell\big(\cdot+(\tau-t)k_j-\tau k_\ell\big)\big|^2  \Big)
\,d\tau .
\end{equation}
In particular we have mass conservation for each individual mode 
\begin{equation}\label{con}
\|A_j(t,\cdot)\|_{L^2(\R^d)} = \|a_j \|_{L^2(\R^d)}, \quad \forall \, t \in \R.
\end{equation}
\end{lemma}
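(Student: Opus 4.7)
The plan is to reduce the nonlinear transport system \eqref{eq:ampsyst} to a closed-form expression via an ansatz exploiting a built-in cancellation, then verify the claimed regularity and mass conservation directly. For $\alpha > 1$ the equation \eqref{eq:ampsyst} is pure free transport, so I simply set $A_j(t,x) = a_j(x - tk_j)$; both $L^2$-regularity and membership in $\calA$ are immediate, since translations act as isometries of $L^2$ and since $\widehat{A}_j(t,\xi) = e^{-it k_j \cdot \xi}\, \widehat{a}_j(\xi)$ gives $\|\widehat{A}_j(t)\|_{L^1}=\|\widehat{a}_j\|_{L^1}$, while continuity in $t$ follows by dominated convergence on the Fourier side.

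For the critical case $\alpha = 1$ I would try the ansatz $A_j(t,x) = a_j(x - tk_j)\, e^{i S_j(t,x)}$ with $S_j$ real-valued. The decisive observation is that under this ansatz $|A_j(t,x)|^2 = |a_j(x - tk_j)|^2$, so the effective potential
\[
V_{\rm eff}(A)(t,x) = \sum_{\ell=1}^J \bigl(K \ast |a_\ell(\cdot - tk_\ell)|^2\bigr)(x)
\]
depends only on the initial data and not on any of the unknown phases $S_j$. This decouples the system into $J$ independent linear scalar transport equations. Substituting the ansatz into \eqref{eq:ampsyst} and using that $a_j(x - tk_j)$ solves the free transport reduces the problem to
\[
\partial_t S_j + k_j \cdot \nabla S_j = - V_{\rm eff}(A)(t,x), \qquad S_j\big|_{t=0} = 0,
\]
which I would integrate along the characteristic $\tau \mapsto x + (\tau - t) k_j$; a translation in the convolution variable then recovers the stated formula \eqref{S}. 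Reality of $V_{\rm eff}$ (since $K$ is real) guarantees $S_j$ is real, making the ansatz self-consistent.

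It remains to verify $A \in C([0,\infty); L^2 \cap \calA)$ and mass conservation \eqref{con}. Mass conservation and the $L^2$ statement are immediate, since $|A_j(t, x)| = |a_j(x - tk_j)|$ and the $L^2$-norm is translation invariant. For the $\calA$-regularity I would use that $\calA$ is a translation-invariant Banach algebra closed under complex conjugation. The hypothesis $\widehat{K} \in L^\infty$ gives $\| K \ast |a_\ell|^2 \|_\calA \le C \|\widehat{K}\|_\infty \| a_\ell \|_\calA^2$; integration in $\tau$ and summation over $\ell$ (controlled by $\sum_\ell \|a_\ell\|_\calA^2 \le \|a\|_\calA^2 < \infty$) then yield $S_j(t, \cdot) \in \calA$ with a bound linear in $t$.

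The hard part is the final step, because the pure phase $e^{iS_j}$ does not itself lie in $\calA$ (it does not decay at infinity), so one cannot simply multiply and bound. My plan is to write
\[
A_j(t,\cdot) = a_j(\cdot - tk_j) + a_j(\cdot - tk_j)\bigl(e^{iS_j(t,\cdot)} - 1\bigr),
\]
expand $e^{iS_j} - 1 = \sum_{n \ge 1} (iS_j)^n / n!$, and apply the Banach-algebra inequality term by term to get $\|e^{iS_j(t, \cdot)} - 1\|_\calA \le e^{\|S_j(t, \cdot)\|_\calA} - 1 < \infty$. Multiplying by $a_j(\cdot - tk_j) \in \calA$ gives $A_j(t, \cdot) \in \calA$ with a bound uniform on $[0, T]$; summing over $j$ and using $\sum_j \|a_j\|_\calA < \infty$ closes the verification that $A(t) \in \calA$. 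Continuity in $t$ then follows from the continuous dependence of $S_j(t, \cdot)$ on $t$ in $\calA$ (clear from its integral representation) combined with the algebra structure.
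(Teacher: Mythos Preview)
Your derivation of the explicit representation is essentially the paper's: both arguments hinge on the observation that $|A_j(t,x)|^2=|a_j(x-tk_j)|^2$, which makes $V_{\rm eff}$ a known function of the data alone and reduces the problem to integrating a linear transport equation for $S_j$ along characteristics. The only cosmetic difference is that the paper first \emph{derives} the modulus identity by multiplying \eqref{eq:ampsyst} with $\bar A_j$ and taking the real part, whereas you \emph{postulate} the ansatz and then note its self-consistency; for an existence statement either direction is fine.

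Where you go beyond the paper's proof of this lemma is in the $\calA$-regularity. The paper's proof here is terse and effectively defers that point to Lemma~\ref{lem:amp}, which instead uses a Cauchy--Lipschitz fixed-point argument for local existence in $\calA$ and then invokes the explicit representation \eqref{amp} to globalize. Your route---show $S_j(t,\cdot)$ lies in the Wiener algebra, then control $e^{iS_j}-1$ by the power-series bound $\|e^{iS_j}-1\|\le e^{\|S_j\|}-1$ and multiply by $a_j(\cdot-tk_j)$---is a correct and more direct alternative that avoids any fixed-point machinery. One notational point: $\calA$ in this paper is the sequence space $\ell^1(\Gamma;W(\R^d))$, so when you write $\|S_j\|_\calA$ or $\|K\ast|a_\ell|^2\|_\calA$ for a single scalar function you should be using the Wiener norm $\|\cdot\|_W$; with that correction your estimates read $\|S_j(t,\cdot)\|_W\le t\,\|\widehat K\|_{L^\infty}\|a\|_\calA^2$ and $\|A(t,\cdot)\|_\calA\le \|a\|_\calA\, e^{t\|\widehat K\|_{L^\infty}\|a\|_\calA^2}$, and the argument closes.
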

In contrast to $\phi_j$ the phases $S_j$ are only
\emph{slowly varying}, i.e. they oscillate with frequencies larger than $1/
\e$. They describe the nonlinear self-modulation of the amplitudes
but do not show up in quadratic quantities, like the mass density
$|A_j|^2$ etc.
\begin{proof} 
By multiplying \eqref{eq:ampsyst} with $\bar A_j$ and taking the 
real part, we see 
\begin{equation*}
\(\d_t+ k_j \cdot \nabla \) |A_j|^2 =0,
\end{equation*}
which yields $ |A_j(t,x)|^2 =|a_j(x-t k_j)|^2 $ and thus \eqref{amp}
and \eqref{con}.  Finally, inserting \eqref{amp} into
\eqref{eq:ampsyst} and integrating along characteristics, we obtain
the expression \eqref{S} for $S_j$.
\end{proof}
Having obtained the characteristic phases
$\phi_j$ and the leading order amplitudes $A_j$ we shall now turn our attention towards the remainder, i.e.
\begin{equation}\label{remainder}
R(u_{\rm app}^\e) =
\begin{cases} \e^2 X_2+\e Y_R& \text{if $\alpha=1$,}\\
\e^2 X_2+ \e^\alpha (Y+Y_R)& \text{if $\alpha>1$.}
\end{cases} 
\end{equation}
For $\alpha=1$ the term $Y_R$ appearing within 
$R(u_{\rm app}^\e)$ is
formally of order $\O(\eps)$. Thus, at first glance, $Y_R$ seems to be too large to be considered a part of the remainder. 
It will be our main task to show that $Y_R$ is indeed sufficiently small as $\eps \to 0_+$. To this end, we shall rely on the framework of Wiener algebras.

\begin{remark}\label{SchubertRemark} 
In the case $\alpha >1$, there is a some freedom in deriving the system of amplitude equations. 
Indeed, one could regard $\eps^{\alpha-1}$ 
merely as a prefactor for the interaction kernel $K$ and solve in the next step $X_1+\eps^{\alpha-1}Y=0$. Instead of \eqref{eq:ampsyst}, 
we would consequently get
\begin{equation*}
\partial_t A^\e_{j} + k_j \cdot \nabla  A^\e_{j}    =  
-i \eps^{\alpha-1}V_{\rm eff}(A^\e)A^\e_{j} \quad \text{ for  $\alpha\ge1$,} 
\end{equation*}
which yields
\begin{equation}\label{epsAmp}
A^\e_j(t,x) =  a_j(x-t k_j) e^{i \e^{\alpha -1} S_j(t,x)},
\end{equation}
with $S_j$ given as before. In other words, the leading order amplitudes would become $\e$-dependent and instead of \eqref{remainder} the remainder would have the form   
\begin{equation*}
R(u_{\rm app}^\e) =\e^2 X_2+ \e^\alpha Y_R\quad \text{for $\alpha\ge1$.}
\end{equation*}
We expect that the subsequent analysis can be adapted to the case of 
$\eps$-dependent amplitudes given by \eqref{epsAmp}, and in fact we expect a slightly better error estimate in this case. Namely, we expect \eqref{errorestimate} to hold
with $\beta=1$ \emph{for all} $\alpha\ge1$. However, we hesitate to carry through this approach in full detail, since 
the separation of slow- and fast scales within the leading order approximation becomes much less apparent. In addition, one usually 
does not regard \eqref{epsAmp} as a proper leading order amplitude in semi-classical analysis (due to its $\e$-dependence). 
We shall therefore resume our analysis with \emph{$\eps$-independent $A_j$} as in our original ansatz \eqref{ansatz}.
\end{remark}

\section{The Wiener algebra framework}
\label{sec:wiener}

We now present the analytical framework of Wiener algebras which already
proved its use in similar circumstances, cf.\ \cite{CDSp, MColinLannes, JMRWiener}. We start with the following definition.
\begin{definition}[Wiener Algebra]
We define
\begin{equation*}
W(\R^d):= \left\{ f\in {\mathcal S}'(\R^d;\C),\ \
  \|f\|_W:=\|\widehat f\|_{L^1(\R^d)} <\infty\right\}. 
\end{equation*}
\end{definition}
The following properties of $W(\R^d)$ have been proved in \cite{MColinLannes, JMRWiener}.

\renewcommand{\labelenumi}{\textbf{\roman{enumi}.}}

\begin{lemma}\label{lem:propWiener}\ 
\begin{enumerate}
\item \label{Assert1}
$W(\R^d)$ is a Banach space, continuously embedded into $L^\infty(\R^d)$. 
\item\label{Assert2}
$W(\R^d)$ is an algebra, in the sense that the mapping $(f,g)\mapsto fg$ 
is continuous from $W(\R^d)^2$ to $W(\R^d)$, and moreover
\begin{equation*}
 \forall f,g\in W(\R^d), \quad
 \|fg\|_W\le \|f\|_W\|g\|_W.
\end{equation*}
\item \label{Assert4}
Let $U^\eps(t)=e^{i \eps \frac{t}{2}\Delta}$ denote the free Schr\"odinger 
group. Then, for all $t\in \R$, $U^\eps(t)$ is unitary on $W(\R^d)$. 
\end{enumerate}
\end{lemma}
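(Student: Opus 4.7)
The plan is to transfer all three assertions to standard facts about $L^1(\R^d)$ via the Fourier transform, which, by the very definition of the norm on $W(\R^d)$, is a linear isometry from $W(\R^d)$ onto its image in $L^1(\R^d)$. The point is that each property of $W(\R^d)$ has an obvious $L^1$-counterpart and nothing more needs to be done beyond pulling it back through $\mathcal F$.

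For assertion \textbf{i}, I would first check that $\mathcal F$ is in fact a bijection $W(\R^d)\to L^1(\R^d)$: injectivity on tempered distributions is classical, and any $g\in L^1(\R^d)$ has $\mathcal F^{-1}g\in \calS'$ with $\widehat{\mathcal F^{-1}g}=g\in L^1$, hence $\mathcal F^{-1}g\in W$. Completeness of $W$ then inherits directly from that of $L^1$: for any Cauchy sequence $(f_n)\subset W$, the sequence $(\widehat{f_n})$ is Cauchy in $L^1$, converges to some $g$, and $f:=\mathcal F^{-1}g$ satisfies $f_n\to f$ in $W$. The continuous embedding $W\hookrightarrow L^\infty$ follows from Fourier inversion together with the triangle inequality, giving the pointwise bound $|f(x)|\le (2\pi)^{-d/2}\|\widehat f\|_{L^1}=(2\pi)^{-d/2}\|f\|_W$. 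For assertion \textbf{iii}, I would use that $U^\eps(t)$ acts on the Fourier side as multiplication by the unimodular symbol $e^{-\I\eps t|\xi|^2/2}$; hence $\|U^\eps(t)f\|_W=\|e^{-\I\eps t|\cdot|^2/2}\,\widehat f\|_{L^1}=\|\widehat f\|_{L^1}=\|f\|_W$, which combined with the group identity $U^\eps(t)U^\eps(-t)=\mathrm{Id}$ yields unitarity.

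For assertion \textbf{ii}, I would combine the convolution-multiplication duality, which under the Fourier convention chosen here reads $\widehat{fg}=(2\pi)^{-d/2}\widehat f\ast\widehat g$, with Young's inequality $\|\widehat f\ast\widehat g\|_{L^1}\le\|\widehat f\|_{L^1}\|\widehat g\|_{L^1}$. This delivers the bound $\|fg\|_W\le(2\pi)^{-d/2}\|f\|_W\|g\|_W$, which is in fact stronger than the stated inequality, since $(2\pi)^{-d/2}<1$ for every $d\ge 1$. Continuity of the multiplication map $W\times W\to W$ is then immediate from this submultiplicative estimate.

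None of the three parts conceals a genuine obstacle, since each is a routine pullback through $\mathcal F$ of a well-known property of $L^1(\R^d)$. The only technical point worth flagging is the bookkeeping of the $(2\pi)^{-d/2}$ prefactor in part \textbf{ii}, which depends on the chosen Fourier normalization but in the present convention happens to work in our favor and thus produces the clean inequality stated in the lemma.
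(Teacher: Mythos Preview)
Your argument is correct in all three parts. The paper does not actually supply a proof of this lemma: it cites \cite{MColinLannes, JMRWiener} for the properties and adds only the one-line remark that assertion iii follows from $U^\eps(t)$ acting as multiplication by $e^{-\I \eps t|\xi|^2/2}$ on the Fourier side, which is exactly your argument for that part. Your treatment of i and ii simply spells out the standard details those references contain, and your observation about the $(2\pi)^{-d/2}$ prefactor is a nice bonus that the paper does not mention.
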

Assertion iii. follows from the fact that $\widehat U^\eps(t) = e^{i t |\xi|^2/2}$, acting as a multiplication operator in  Fourier space.
From now on, we shall consider the Cauchy problem \eqref{hartree} to be posed in $W(\R^d)$. To this end we need the following well-posedness result 
(which is an adaptation of the one given in \cite{CDSp} to the case of Hartree nonlinearities).

\begin{lemma}\label{lem:existsol}
Consider the initial value problem 
\begin{equation} \tag{\ref{hartree}}
\label{ivp}
i\eps\d_t u^\eps +\frac{\eps^2}{2}\Delta u^\eps =  \eps^\alpha
(K\ast |u^\eps|^{2})u^\eps, \quad u^\eps\big |_{t=0}=u_0^\eps,
\end{equation}
with $\alpha \ge 1$.  If $\widehat K \in L^\infty(\R^d)$ and $u_0^\eps
\in W( \R^d)$ with $\|u_0^\eps\|_W\le D_0$, then there exists a
$T_0>0$, which depends on $D_0$ but not on $\e$, and a unique solution
$u^\eps \in C([0,T_0];W( \R^d))$ of \eqref{ivp}.
\end{lemma}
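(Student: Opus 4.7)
The plan is to set up a Banach fixed-point argument based on the Duhamel reformulation
\[
u^\eps(t) = U^\eps(t)u_0^\eps \;-\; i\eps^{\alpha-1}\int_0^t U^\eps(t-s)\bigl[(K\ast|u^\eps(s)|^2)\,u^\eps(s)\bigr]\,ds,
\]
carried out on the closed ball $X_T := \{u\in C([0,T];W(\R^d)) : \|u\|_{L^\infty_T W}\le 2D_0\}$, a complete metric space under the supremum-in-time Wiener norm. Denoting by $\Phi$ the right-hand side viewed as an operator on $X_T$, the task is to find $T_0>0$, depending only on $D_0$ and $\|\widehat K\|_{L^\infty}$ and \emph{not} on $\eps$, such that $\Phi$ is a strict contraction from $X_{T_0}$ into itself.

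The core ingredient is a trilinear estimate for the Hartree nonlinearity in $W$. Combining $\widehat{f\ast g}=(2\pi)^{d/2}\widehat f\,\widehat g$ with the identity $\|\bar u\|_W=\|u\|_W$ (which follows from $\widehat{\bar u}(\xi)=\overline{\widehat u(-\xi)}$) and the algebra property of Lemma~\ref{lem:propWiener}(ii), one obtains
\[
\|K\ast|u|^2\|_W \le (2\pi)^{d/2}\|\widehat K\|_{L^\infty}\,\|u\|_W^2 \quad\text{and hence}\quad \|(K\ast|u|^2)u\|_W \le C\|u\|_W^3,
\]
with $C=(2\pi)^{d/2}\|\widehat K\|_{L^\infty}$. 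Polarization gives the matching difference estimate $\|(K\ast|u|^2)u-(K\ast|v|^2)v\|_W \le C(\|u\|_W+\|v\|_W)^2\|u-v\|_W$. Together with the $W$-isometry of $U^\eps(t)$ from Lemma~\ref{lem:propWiener}(iii) and the uniform bound $\eps^{\alpha-1}\le 1$ (valid for $\alpha\ge 1$, $\eps\in(0,1]$), the Duhamel integral contributes a factor of $T$ times a cubic (respectively quadratic-in-difference) polynomial in the relevant $W$-norms, with all constants independent of $\eps$.

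Choosing $T_0$ of the form $\min\{1,\,c/(CD_0^2)\}$ for a suitable absolute constant $c$ then makes $\Phi$ simultaneously a self-map of $X_{T_0}$ and a strict contraction, so Banach's fixed-point theorem delivers the unique $u^\eps\in X_{T_0}$. Uniqueness in the ambient space $C([0,T_0];W)$ without the size constraint follows by applying the same difference estimate to two candidate solutions on an initial sub-interval. Continuity in time (needed to verify that $\Phi$ actually lands in $C([0,T_0];W)$) reduces to strong continuity of $t\mapsto U^\eps(t)\phi$ in $W$ for fixed $\phi\in W$, which follows from dominated convergence applied to $e^{i\eps t|\xi|^2/2}\widehat\phi(\xi)$ with integrable majorant $|\widehat\phi|$ in Fourier space. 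No step here is a serious obstacle; the only real issue is tracking $\eps$-dependence, and since the operator norm of $U^\eps(t)$ on $W$ is identically $1$ while the prefactor $\eps^{\alpha-1}$ is at most $1$ under the hypotheses, the existence time $T_0$ genuinely depends only on $D_0$ and $\|\widehat K\|_{L^\infty}$, as claimed.
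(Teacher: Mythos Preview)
Your proposal is correct and follows essentially the same approach as the paper: Duhamel reformulation, the trilinear Wiener estimate $\|(K\ast(uv))w\|_W \lesssim \|\widehat K\|_{L^\infty}\|u\|_W\|v\|_W\|w\|_W$, and a contraction fixed-point argument in a ball of $C([0,T];W)$. You are in fact slightly more explicit than the paper in spelling out why $T_0$ is $\eps$-independent (via $\eps^{\alpha-1}\le 1$ for $\eps\in(0,1]$) and in verifying the time-continuity of $U^\eps(t)\phi$ and uniqueness in the ambient space.
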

\begin{proof}
Duhamel's formulation of \eqref{ivp} reads
\begin{equation*}
 u^\eps(t) = U^\eps(t) u_0^\eps - i \eps^{\alpha-1} \int_0^t
 U^\eps(t-\tau)\((K\ast |u^\eps|^{2})u^\eps)\) (\tau) \, d\tau. 
\end{equation*}
Denote, for fixed $\u_0$, the right hand side of this formula 
by $\Phi^\eps(u^\eps)(t)$. From  Lemma~\ref{lem:propWiener} iii. we have
\begin{equation}\label{phi1}
\left\lVert \Phi^\eps(u^\eps)(t)\right\rVert_{W}\le  D_0 
+\eps^{\alpha-1}\int_0^t \|\((K\ast |u^\eps|^{2})u^\eps)\)(\tau) \|_W \, d\tau.
\end{equation}
In order to control the nonlinear term, we need to estimate expressions of the form $(K\ast(uv))w$ in $W(\R^d)$. 
To this end, we first use Lemma \ref{lem:propWiener} ii, to obtain
\begin{equation*}
\|(K\ast(uv))w\|_W\le \|K\ast(uv)\|_W\|w\|_W.
\end{equation*}
By H\"older's inequality we also get
\begin{equation}\label{mainestimate2}
\|K\ast(uv)\|_W=\|\widehat{K\ast(uv)}\|_{L^1} \le \|\widehat K \|_{L^\infty}
\|uv\|_W,
\end{equation}
and applying again Lemma \ref{lem:propWiener} ii, we arrive at
\begin{equation}\label{mainestimate}
\|(K\ast(uv))w\|_W
\le \|\widehat K\|_{L^\infty}\|u\|_W\|v\|_W\|w\|_W
\quad\text{for $u,v,w\in W(\R^d)$}.
\end{equation}
Thus, from \eqref{phi1} we obtain
\begin{equation*}
\left\lVert \Phi^\eps(u^\eps)(t)\right\rVert_{W}\le D_0 
+\eps^{\alpha-1}\|\widehat K\|_{L^\infty}\int_0^t\|\u(\tau)\|_W^3\, d\tau.
\end{equation*}
Moreover, $\u \mapsto \Phi^\eps(u^\eps)$ is locally Lipschitz in
$U:=C([0,T],W(\R^d))$: If $\|u^\eps\|_{U}\le D$, 
$\|v^\eps\|_{U}\le D$, then there exists $C=C(D)$ such that
\begin{equation*}
\left\lVert \Phi^\eps(u^\eps)(t)- \Phi^\eps(
 v^\eps)(t)\right\rVert_{W}\le C(D)\int_0^t\left\lVert
 u^\eps(\tau)- v^\eps(\tau)\right\rVert_{W}
d \tau,\quad \forall t\in [0,T]. 
\end{equation*}
A fixed point argument in $ \big\{ u\in U:\ \sup_{t\in
 [0,T]}\|u(t)\|_W\le D \big\}$, with $D>D_0$, for $T=T_0$
sufficiently small, then yields Lemma~\ref{lem:existsol}.
\end{proof}

Having set up an existence result for the exact solution $u^\e$ in
$W(\R^d)$, we now turn to the approximate solution $\uapp$ given by
\eqref{approx}. To this end, we shall need the following lemma, 
which shows that the Wiener space is perfectly adapted to our
problem. 

\begin{lemma} \label{lem:substitR}
Let $k_j \in\R^d$, $c_j\in\R$, and $b \in \ell^1(\Gamma, W(\R^d))$, and
set 
\begin{equation*}
 f(x,y)=\sum_{j=1}^J b_j(x) e^{i (k_j \cdot y+c_j)}.
\end{equation*}
Then, for all $\eps>0$ the function $f(\cdot,\cdot/\eps): x
\mapsto f(x,x/\e)$ lies in $W(\R^d)$ 
with
$$
\|f(\cdot,\cdot/\eps)\|_W \le \| b \|_{\mathcal A} 
= \sum_{j\in \Gamma} \| b_j\|_W  .
$$
\end{lemma}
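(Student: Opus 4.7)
The plan is to exploit the modulation identity for the Fourier transform, namely that multiplication by $e^{i\eta \cdot x}$ corresponds to translation by $\eta$ on the Fourier side, combined with the translation-invariance of the $L^1(\R^d)$-norm. Setting $g_\e(x) := f(x,x/\e) = \sum_{j} b_j(x)\,e^{i(k_j\cdot x/\e + c_j)}$, each summand has Fourier transform
\begin{equation*}
\mathcal F\bigl[b_j(\cdot)\,e^{i(k_j\cdot\,\cdot/\e + c_j)}\bigr](\xi) = e^{ic_j}\,\widehat{b_j}\bigl(\xi - k_j/\e\bigr).
\end{equation*}
Taking the $L^1(\R^d)$-norm and using translation invariance gives $\|\widehat{b_j}(\cdot - k_j/\e)\|_{L^1} = \|\widehat{b_j}\|_{L^1} = \|b_j\|_W$, which is the crucial gain: the bound is independent of $\e$ and of the position $k_j/\e$ of the spectral translate.

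Next I would handle the sum over $j\in\Gamma$. First I deal with a finite index set $\Gamma = \{1,\dots,N\}$: by linearity of the Fourier transform and the triangle inequality in $L^1(\R^d)$,
\begin{equation*}
\|g_\e\|_W = \|\widehat{g_\e}\|_{L^1} \le \sum_{j=1}^{N} \|\widehat{b_j}(\cdot - k_j/\e)\|_{L^1} = \sum_{j=1}^{N} \|b_j\|_W.
\end{equation*}
For the case $J = \infty$, I would approximate by truncation: let $g_\e^{(N)}(x) := \sum_{j=1}^{N} b_j(x)\,e^{i(k_j\cdot x/\e + c_j)}$. The same estimate applied to tails shows that $\{g_\e^{(N)}\}_{N}$ is Cauchy in $W(\R^d)$, since $\sum_{j>N}\|b_j\|_W \to 0$ by the assumption $b\in\ell^1(\Gamma,W(\R^d))$. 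Since $W(\R^d)$ is a Banach space (Lemma \ref{lem:propWiener} i), the limit exists in $W(\R^d)$ and coincides pointwise a.e.\ (and in $L^\infty$, by the embedding $W \hookrightarrow L^\infty$) with $g_\e$, so $\|g_\e\|_W \le \sum_{j\in\Gamma}\|b_j\|_W = \|b\|_\mathcal A$.

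I do not expect any serious obstacles: the whole statement is essentially a bookkeeping exercise around the modulation identity, and the only subtle point is the justification of the limit in the infinite sum, which is settled by Cauchy completeness of $W(\R^d)$ together with the $\ell^1$ hypothesis on $(\|b_j\|_W)_{j\in\Gamma}$. No smallness or non-resonance condition on the $k_j$ is needed here, precisely because the Wiener norm is insensitive to pure frequency translations.
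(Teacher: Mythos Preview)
Your argument is correct and is essentially identical to the paper's proof: compute the Fourier transform via the modulation identity, apply the triangle inequality, and use translation invariance of the $L^1$-norm. The only difference is that you spell out the $J=\infty$ case via truncation and completeness of $W(\R^d)$, whereas the paper simply writes the estimate for the full sum directly; this is added rigor rather than a different approach.
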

\begin{proof}
We write 
$$\|f(\cdot,\cdot/\eps)\|_W = \big \lVert 
\sum_{j=1}^Je^{ic_j}\widehat{b_j}(\cdot-k_j/\eps)\big \rVert_{L^1}  
\le \sum_{j=1}^J \|\widehat{b_j}(\cdot-k_j/\eps)\|_{L^1} 
= \sum_{j=1}^J \|\widehat{b_j}\|_{L^1}.$$
The last term is, by definition, $\|b \|_{\mathcal A}$. 
\end{proof}

\begin{remark} This lemma in general does
\emph{not} hold for functions of 
the form $f(x,y)= \sum_{j=1}^J b_j(x)  e^{i \varphi_j(y)}$, with 
$\varphi_j(y) \not = k_j \cdot y+c_j$. 
A generalization of our study to non-plane wave WKB phases therefore seems to be a delicate issue (at least within the Wiener framework) and by no means straightforward. 
\end{remark}

Since the phases $\phi_j$ considered in this work are of plane-wave form 
\eqref{eq:planewave}, 
applying Lemma \ref{lem:substitR} with $c_j=-\frac{t}{2\e}|k_j|^2$
to \eqref{approx},
we immediately obtain
\begin{equation}\label{uappagain}
\|\uapp(t,\cdot)\|_W 
\le \|A(t,\cdot)\|_\calA.
\end{equation}
Similarly, we can estimate the expression \eqref{res0} for $X_2$ by
\begin{equation}\label{estuappX2}
\|X_2(t,\cdot)\|_W  
\le \frac12 \|\Delta A(t,\cdot)\|_\calA = \frac12 \sum_{j=1}^J\|\Delta A_j(t,\cdot)\|_W.
\end{equation}
In addition, we obtain the following estimate for $Y$, defined in \eqref{resY}:
\begin{equation*}
\|Y(t,\cdot)\|_W
\le
\Big\|K\ast\sum_{\ell=1}^{J}|A_{\ell}(t,\cdot)|^2\Big\|_W
\|A(t,\cdot)\|_\calA,
\end{equation*}
where we have used Lemma \ref{lem:substitR} and  
Lemma \ref{lem:propWiener} ii. This can be estimated further similarly to \eqref{mainestimate2} by using 
Lemma \ref{lem:substitR} (with $k_j=0$, $c_j=0$), as well as Assertion ii. of Lemma \ref{lem:propWiener} and the fact that $\ell^\infty\subset\ell^1$, to obtain 
\begin{equation}\label{estY}
\|Y(t,\cdot)\|_W
\le
\|\widehat K\|_{L^\infty}
\|A(t,\cdot)\|_\calA^3.
\end{equation}
In order to close the argument, we consequently require appropriate bounds in $\mathcal A(\R^d)$ on the amplitudes $A_j(t,\cdot)$, together with their spatial derivatives. 

\begin{lemma} \label{lem:amp}
Let $\alpha \ge 1$ and $\widehat K \in L^\infty (\R^d)$. 
For all $a=(a_j)_{j\in\Gamma}\in \mathcal A(\R^d)$, 
there exists a unique solution $A \in C([0,\infty); \calA(\R^d))$ 
to the system~\eqref{eq:ampsyst}. Moreover, if $(\partial^p_x a_j)_{j\in\Gamma}\in\calA(\R^d)$, for 
$|p|\le 2$, 
then $(\partial^p_x A_j)_{j\in\Gamma}\in C([0,\infty);\calA(\R^d))$.
\end{lemma}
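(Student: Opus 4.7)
The plan is to exploit the explicit representation \eqref{amp}--\eqref{S} from Lemma~\ref{lem:repAj} together with the algebra structure of $W(\R^d)$. In the case $\alpha>1$ the amplitudes are pure translates, $A_j(t,x)=a_j(x-tk_j)$, so $\widehat{A_j}(t,\xi)=e^{-itk_j\cdot\xi}\widehat{a_j}(\xi)$ and therefore $\|A_j(t,\cdot)\|_W=\|a_j\|_W$ for all $t$. Summation over $j\in\Gamma$ gives $\|A(t,\cdot)\|_\calA=\|a\|_\calA$, and continuity $t\mapsto A(t,\cdot)\in\calA$ follows from the strong continuity of translations on $L^1(\R^d)$ applied to each $\widehat{a_j}$ together with dominated convergence in $\ell^1(\Gamma)$. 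The derivative claim is immediate since $\partial_x^p$ commutes with translation.

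The real work lies in the critical case $\alpha=1$. The key observation, already used in the derivation of \eqref{S}, is that $V_{\rm eff}(A)$ depends on $A$ only through $|A_\ell|^2=|a_\ell(\cdot-\tau k_\ell)|^2$, and hence is \emph{independent of the unknown slow phases} $S_j$. Combining translation-invariance of $\|\cdot\|_W$, estimate~\eqref{mainestimate2}, and the algebra property $\||a_\ell|^2\|_W\le\|a_\ell\|_W^2$ yields
\begin{equation*}
\|V_{\rm eff}(A)(\tau,\cdot)\|_W\le\|\widehat K\|_{L^\infty}\sum_{\ell=1}^J\|a_\ell\|_W^2\le \|\widehat K\|_{L^\infty}\|a\|_\calA^2,
\end{equation*}
and integrating \eqref{S} then gives the a priori bound $\|S_j(t,\cdot)\|_W\le t\|\widehat K\|_{L^\infty}\|a\|_\calA^2$. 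Since $S_j$ is real-valued, the Taylor series $e^{iS_j}=\sum_{n\ge0}(iS_j)^n/n!$ converges in the Banach algebra $W$, yielding $\|e^{iS_j(t,\cdot)}\|_W\le e^{\|S_j(t,\cdot)\|_W}$. Multiplying by $\|a_j(\cdot-tk_j)\|_W=\|a_j\|_W$ and summing over $j$ produces the global-in-time bound
\begin{equation*}
\|A(t,\cdot)\|_\calA\le \|a\|_\calA\exp\!\big(t\|\widehat K\|_{L^\infty}\|a\|_\calA^2\big).
\end{equation*}
Continuity of $A_j(t,\cdot)$ and $S_j(t,\cdot)$ in $W$ follows by splitting the $t$-dependence in \eqref{S} into a boundary-of-integration term and an inner-translation term, both handled by dominated convergence; the preceding bound upgrades this to $\calA$-continuity by a further dominated-convergence step in $j$. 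Uniqueness is a direct Gr\"onwall estimate in $\calA$ for the difference of two candidate solutions, closed via the mass-conservation identity $|A_\ell|^2=|a_\ell(\cdot-tk_\ell)|^2$.

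For the regularity statement, differentiating \eqref{S} under the integral and commuting $\partial_x^{p'}$ with the convolution $K\ast(\cdot)$ and with the translation in its argument gives the analogue of \eqref{S} with $|a_\ell|^2$ replaced by $\partial_x^{p'}|a_\ell|^2$; by Leibniz this is a finite linear combination of products $\partial_x^{p_1}a_\ell\,\partial_x^{p_2}\bar a_\ell$ with $|p_1|+|p_2|\le|p'|\le 2$, and the hypothesis places each factor in $W$. The algebra property thus bounds $\|\partial_x^{p'}S_j(t,\cdot)\|_W$ uniformly on compact $t$-intervals for every $|p'|\le 2$. A final application of Leibniz writes $\partial_x^p(a_j(\cdot-tk_j)e^{iS_j})$ as a finite sum of products of a translated derivative $\partial_x^{r}a_j$ with products of derivatives $\partial_x^{p_i}S_j$ and $e^{iS_j}$, of total order $|p|\le 2$, all controlled in $W$ by the above. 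The main obstacle — which could a priori produce blow-up of the $\calA$-norm through the nonlinear coupling in $V_{\rm eff}$ — is sidestepped by the decoupling between the moduli $|A_\ell|$ and the slow phases $S_j$, which permits all estimates to be closed by plain integration rather than by Gr\"onwall, and thus yields global-in-time control for all $\alpha\ge 1$.
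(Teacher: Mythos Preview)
Your proof is essentially correct and follows a route that differs somewhat from the paper's. The paper first establishes local existence and uniqueness abstractly, by observing that $\|\calN(A)\|_\calA\le\|\widehat K\|_{L^\infty}\|A\|_\calA^3$ and invoking the Cauchy--Lipschitz theorem for the integral equation \eqref{eq:transportsystemint}; only afterwards does it appeal to the explicit representation \eqref{amp} to rule out finite-time blow-up and to propagate regularity. You instead work directly with the explicit formula \eqref{amp}--\eqref{S} from the start, deriving the quantitative global bound $\|A(t,\cdot)\|_\calA\le\|a\|_\calA\exp(t\|\widehat K\|_{L^\infty}\|a\|_\calA^2)$. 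Your approach is more constructive and yields an explicit growth rate; the paper's is terser and relies on standard ODE machinery.

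One technical slip to fix: the constant function $1$ does \emph{not} belong to $W(\R^d)$ (its Fourier transform is a Dirac mass, not an $L^1$ function), so the inequality $\|e^{iS_j}\|_W\le e^{\|S_j\|_W}$ is meaningless as written. The repair is immediate: write $A_j=a_j(\cdot-tk_j)+a_j(\cdot-tk_j)\big(e^{iS_j}-1\big)$, note that $e^{iS_j}-1=\sum_{n\ge1}(iS_j)^n/n!$ does converge in $W$ with norm $\le e^{\|S_j\|_W}-1$, and conclude $\|A_j\|_W\le\|a_j\|_W e^{\|S_j\|_W}$ as you claimed. A second minor point: your uniqueness argument via ``mass conservation for any candidate solution'' tacitly assumes that every $C([0,T];\calA)$-solution satisfies $|A_\ell|^2=|a_\ell(\cdot-tk_\ell)|^2$, which requires a regularization step to justify pointwise. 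It is cleaner simply to use the local Lipschitz bound $\|\calN(A)-\calN(B)\|_\calA\le C(\|A\|_\calA,\|B\|_\calA)\|A-B\|_\calA$ together with Gr\"onwall, exactly as in the paper's implicit Cauchy--Lipschitz step.
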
 

\begin{proof} 
For $\alpha>1$ the statements of the lemma are immediately clear, since in
this case  
$A_j(t,x)=a_j(x-tk_j)$, cf.\ \eqref{eq:ampsyst}. 
For $\alpha=1$ we rewrite \eqref{eq:ampsyst} in its integral form
\begin{equation} \label{eq:transportsystemint}
A_j(t,x) = a_j(x-t  k_j)  
+\int_0^t \mathcal{N}(A)_j(\tau,x+(\tau-t)k_j) d\tau,
\end{equation}
where the nonlinearity $\mathcal{N}(A)_j$ is given by
\begin{equation*}
\calN(A)_j  = 
\ds-i\Big(K\ast\big(\sum_{\ell=1}^J|A_{\ell}|^2\big)\Big )A_j
\end{equation*}
Invoking the same arguments as for the derivation of \eqref{estY}, we obtain
\begin{equation*}
\|\calN(A)\|_\calA=\sum_{j=1}^J\|\calN(A)_j\|_W
\le \|\widehat K\|_{L^\infty}\|A\|_\calA^3.
\end{equation*}
This shows that $\mathcal{N}(A) $ defines a continuous mapping from $\mathcal A^{3} $ to $\mathcal A$ and a local-in-time existence result immediately
follows from the standard Cauchy-Lipschitz theorem for ordinary differential equations. That the solutions $A_j$ indeed exist for all $t\ge 0$ then follows from the 
explicit representation \eqref{amp}.
From the latter we additionally obtain the propagation of regularity,
by explicit calculation of $\partial_x^p A$.
\end{proof}

Lemma \ref{lem:amp} consequently establishes the estimates in
$W(\R^d)$ for $u_{\rm app}^\e$, $X_2$ and $Y$ in a rigorous way.  Note
however, that the above given estimates do not yield an estimate for
the remainder $R(u^\e_\text{app})$, given by \eqref{remainder},
since it also includes $Y_R$, which we completely ignored so
far.  We will make up for it in the following section.

\section{Estimates on the remainder  
and proof of the main theorem} 
\label{sec:proof} 

It remains to estimate in $W(\R^d)$ the term $Y_R$ given in \eqref{res}. To this end we shall prove the following key technical result.

\begin{proposition} \label{lem:remainder}
Let $Y_R$ be defined by \eqref{res} 
with plane-wave phases $\phi_j$ given by \eqref{eq:planewave} 
and assume 
$|\Lambda|_\infty^{-1}:=\inf\{|k_\ell-k_m|:\ \ell,m\in\Gamma,\ \ell\neq m\}>0$. Moreover, let $K$ be such that $\widehat K \in L^\infty (\R^d)$ and $\widehat { \nabla K} \in L^\infty(\R^d)$. 
Then we have the following bound:
\begin{equation}\label{Ybound}
\| Y_R (t,\cdot)\|_{W}  \le \e\, C_K \|  A(t,\cdot) \|^2_\calA 
\big(\|  A (t,\cdot)\|_\calA +\| \nabla A (t,\cdot)\|_\calA \big),
\end{equation}
where $C_K>0$ is independent of $\eps$.
\end{proposition}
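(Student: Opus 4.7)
The plan is to reduce the estimate to a single key Fourier-analytic bound on $K\ast(fe^{i\eta\cdot x/\eps})$ with nonzero shift $\eta=k_\ell-k_m$, and then assemble the claim via the Wiener algebra structure and the small-divisor hypothesis.

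First, I would factor $Y_R$ using the algebra property. Since the plane-wave phases satisfy $\phi_\ell-\phi_m=(k_\ell-k_m)\cdot x-\tfrac{t}{2}(|k_\ell|^2-|k_m|^2)$, where the second piece is a constant-in-$x$ unit-modulus factor, applying Lemma~\ref{lem:propWiener}~ii.\ and Lemma~\ref{lem:substitR} to the sum $\sum_j A_je^{i\phi_j/\eps}$ yields
\begin{equation*}
\|Y_R(t,\cdot)\|_W \;\le\; \|A(t,\cdot)\|_\calA\sum_{\ell\neq m}\bigl\|K\ast\bigl(A_\ell\overline{A_m}\,e^{i(k_\ell-k_m)\cdot x/\eps}\bigr)\bigr\|_W .
\end{equation*}
Thus the task reduces to producing an $\O(\eps)$ factor from each convolution--with--oscillation term.

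The heart of the argument is the following Fourier splitting estimate: for any $\eta\in\R^d\setminus\{0\}$ and any $f$ with $f,\nabla f\in W(\R^d)$,
\begin{equation*}
\bigl\|K\ast(fe^{i\eta\cdot x/\eps})\bigr\|_W \;\le\; \frac{C\eps}{|\eta|}\bigl(\|f\|_W+\|\nabla f\|_W\bigr),
\end{equation*}
with $C$ depending only on $\|\widehat K\|_{L^\infty}$ and $\|\widehat{\nabla K}\|_{L^\infty}$. Its proof starts from $\widehat{fe^{i\eta\cdot x/\eps}}(\xi)=\widehat f(\xi-\eta/\eps)$, so that after shifting the integration variable,
\begin{equation*}
\|K\ast(fe^{i\eta\cdot x/\eps})\|_W \;=\; (2\pi)^{d/2}\int_{\R^d}|\widehat K(\xi+\eta/\eps)|\,|\widehat f(\xi)|\,d\xi .
\end{equation*}
I would split at $|\xi|=|\eta|/(2\eps)$. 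On the region $|\xi|\le|\eta|/(2\eps)$ the triangle inequality forces $|\xi+\eta/\eps|\ge|\eta|/(2\eps)$, and the bound $|\zeta||\widehat K(\zeta)|\le\|\widehat{\nabla K}\|_{L^\infty}$ yields $|\widehat K(\xi+\eta/\eps)|\le 2\eps\|\widehat{\nabla K}\|_{L^\infty}/|\eta|$, producing the term $\eps|\eta|^{-1}\|f\|_W$. On the complementary region I would bound $|\widehat K|$ crudely by $\|\widehat K\|_{L^\infty}$ and exploit the Fourier identity $|\xi||\widehat f(\xi)|\le|\widehat{\nabla f}(\xi)|$, so that $|\widehat f(\xi)|\le 2\eps|\eta|^{-1}|\widehat{\nabla f}(\xi)|$ there, giving the term $\eps|\eta|^{-1}\|\nabla f\|_W$.

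Finally I would apply this estimate with $f=A_\ell\overline{A_m}$ and $\eta=k_\ell-k_m$, whose length is bounded below by $|\Lambda|_\infty^{-1}$ thanks to~\eqref{small}. Using the algebra property for $\|A_\ell\overline{A_m}\|_W$ and the Leibniz rule for $\nabla(A_\ell\overline{A_m})$, the double sum $\sum_{\ell\neq m}$ factors into products of $\calA$-norms and is dominated by $C|\Lambda|_\infty\,\|A\|_\calA\bigl(\|A\|_\calA+\|\nabla A\|_\calA\bigr)$. Combined with the outer $\|A\|_\calA$ from the first step, this produces the claim with $C_K$ absorbing $|\Lambda|_\infty$ together with the Fourier constants. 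The main obstacle is the splitting argument itself: neither $\widehat K\in L^\infty$ nor $\widehat{\nabla K}\in L^\infty$ alone would suffice, and the gradient regularity of $A$ must be used precisely to control the Fourier tail of $f$ on the region where only the plain $L^\infty$ bound on $\widehat K$ is available.
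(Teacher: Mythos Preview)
Your proposal is correct and reaches the same bound, but the mechanism for extracting the factor $\eps$ differs from the paper's. The paper works in physical space: it writes $e^{iy\cdot(k_\ell-k_m)/\eps}$ as $-i\eps\,\Lambda_{\ell,m}\cdot\nabla_y e^{iy\cdot(k_\ell-k_m)/\eps}$ with $\Lambda_{\ell,m}=(k_\ell-k_m)/|k_\ell-k_m|^2$, integrates by parts once inside the convolution, and thereby obtains two terms, one carrying $\nabla K$ and the other $\nabla(A_\ell\overline{A}_m)$, each then estimated via \eqref{mainestimate2} and Lemma~\ref{lem:substitR}. Your approach obtains the same two contributions by a frequency-space dichotomy instead: on $|\xi|\le|\eta|/(2\eps)$ the shifted argument $\xi+\eta/\eps$ is large and the bound $|\zeta|\,|\widehat K(\zeta)|\lesssim\|\widehat{\nabla K}\|_{L^\infty}$ supplies the $\eps$, while on $|\xi|>|\eta|/(2\eps)$ the decay of $\widehat f$ supplies it through $\|\nabla f\|_W$. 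The two arguments are essentially dual; the paper's integration-by-parts version makes the non-stationary-phase character explicit (and would iterate if more regularity were available), whereas your splitting is slightly more elementary in that it sidesteps the density argument the paper needs to justify vanishing boundary terms, and it exhibits cleanly which of the two hypotheses on $\widehat K$ is used in each region.
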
 
\begin{proof} 
Recalling the definition of $Y_R$ given in \eqref{res} and taking into account
the particular plane-wave form \eqref{eq:planewave} of the phases $\phi_j$,
we obtain from Lemma \ref{lem:substitR} and Assertion ii. of Lemma 
\ref{lem:propWiener} that
\begin{equation}\label{YRfirstest}
\|Y_R\|_W \le 
\|A\|_\calA
\Big\|\sum_{\ell,m=1 \atop \ell \not = m }^{J} 
K\ast\big(A_{\ell}\overline{A}_{m}e^{i(\phi_\ell-\phi_m)/\e}\big)\Big\|_W.
\end{equation}
Using,
$$e^{i y \cdot k/ \e+z}=-i\e \frac{k}{|k|^2}\cdot\nabla_y e^{i y \cdot k/\e+z}
\quad (z\in\C),$$ 
we can perform a partial integration w.r.t $y$, and rewrite
\begin{multline*}
K\ast\big(A_\ell\bar A_m e^{i(\phi_\ell - \phi_m)/\eps}\big)
=\int_{\R^d} K(x-y)A_\ell(y)\bar A_m(y)
e^{i(\phi_\ell(y) - \phi_m(y))/\eps}\,dy 
\\
=i\e\int_{\R^d}\frac{k_\ell-k_m}{|k_\ell-k_m|^2}
\cdot\nabla_y\big(K(x-y)A_\ell(y)\bar A_m(y)\big)
e^{i(\phi_\ell(y)-\phi_m(y))/\e}\,dy.
\end{multline*}
To show that the boundary terms vanish, assume first that $A_j\in
\mathcal S(\R^d)$, the set of Schwartz functions (for which the
boundary terms clearly vanish). Since $\mathcal S(\R^d)$ is dense in
$L^1(\R^d)$, Fourier transformation implies that $\mathcal S(\R^d)$ is
also dense in $W(\R^d)$. Consequently, the fact that the expressions
on both sides are norm-bounded sesquilinear forms establishes the
above formula.

Using $k_\ell \not = k_m\in \R^d$, we know that   
$\Lambda_{\ell, m}:= \frac{k_\ell-k_m}{|k_\ell-k_m|^2} \in \R^d$
is well defined and consequently   
\begin{align*}
K\ast\big(A_\ell\bar A_m e^{i(\phi_\ell - \phi_m)/\eps}\big)
=
&-i\e(\Lambda_{\ell, m}{\cdot}\nabla K)\ast
\big(A_\ell\bar A_m e^{i(\phi_\ell-\phi_m)/\e}\big)
\\&
+i\e K\ast\big(\big(\Lambda_{\ell, m}{\cdot}\nabla (A_\ell\bar A_m)\big) 
e^{i(\phi_\ell-\phi_m)/\e}\big).
\end{align*}
Using the estimate \eqref{mainestimate2} and Lemma \ref{lem:substitR} 
(with $J=1$)
we get
\begin{align*}
\|K\ast\big(A_\ell\bar A_m e^{i(\phi_\ell - \phi_m)/\eps}\big)\|_W \le 
& \ \e
\|\Lambda_{\ell, m}{\cdot}\widehat{\nabla K}\|_{L^\infty}
\|A_\ell\|_W\|A_m\|_W  \\
& \, + \e  \|\widehat K\|_{L^\infty}
\|\Lambda_{\ell, m}{\cdot}\nabla (A_\ell\bar A_m)\|_W.
\end{align*}
Invoking again Lemma \ref{lem:substitR} (with $k_j=0$, $c_j=0$)
and Lemma \ref{lem:propWiener} ii, we conclude
\begin{align*}
&\Big\|\sum_{\ell,m=1 \atop \ell \not = m }^{J} 
K\ast\big(A_\ell\bar A_m e^{i(\phi_\ell - \phi_m)/\eps}\big)\Big\|_W
\\\quad &
\le 
\e\sum_{\ell,m=1 \atop \ell \not = m }^{J} \Big(
d|\Lambda_{\ell, m}|\|\widehat{\nabla K}\|_{L^\infty}
\|A_\ell\|_W\|A_m\|_W
+2\|\widehat K\|_{L^\infty}
|\Lambda_{\ell, m}|\|\nabla A_\ell\|_\calA\|A_m\|_W\Big)
\\\quad &
\le 
\e|\Lambda|_\infty \|A\|_\calA 
\Big(d\|\widehat{\nabla K}\|_{L^\infty}\|A\|_\calA
+2\|\widehat K\|_{L^\infty}\|\nabla A\|_\calA\Big)
\end{align*}
where
$\|\widehat{\nabla K}\|_{L^\infty}=\max\limits_{n=1,\ldots,d}
\|\widehat{\partial_{x_n}K}\|_{L^\infty}$, and
$$\|\nabla A\|_\calA=\sum\limits_{\ell=1}^J\|\nabla A_\ell\|_\calA
=\sum\limits_{\ell=1}^J
\sum\limits_{n=1}^d\|\partial_{x_n}A_\ell\|_W.$$
This, together with \eqref{YRfirstest} yields the estimate \eqref{Ybound}.
\end{proof}
\begin{remark}
Proposition \ref{lem:remainder} shows that  $\|Y_R\|_W=\O(\eps)$ and thus can indeed be considered a 
part of the remainder. Note that in the proof it is essential to invoke a 
stationary-phase type argument first, before starting to take estimates. 
In fact we would not succeed to show that $\|Y_R\|_W=\O(\eps)$, 
if we would estimate $Y_R$ in its original form. \end{remark}

By combining the results of Lemma \ref{lem:amp}  and Proposition \ref{lem:remainder}, 
we obtain the following result.  
\begin{corollary}\label{cor:estuappR}
Under the assumptions of Lemma \ref{lem:amp}
and Proposition \ref{lem:remainder}, there exists, for every $T>0$, 
a constant $C_R(T)>0$, 
independent of $\e>0$, such that the remainder $R(u_{\rm app}^\e)$ given by \eqref{remainder}
satisfies
\begin{equation}\label{estuappR}
\|R(u_{\rm app}^\e)\|_W \le \e^\gamma C_R
\qquad\forall\ t\in[0,T],\ \forall\ \e>0,
\end{equation}
where
\begin{equation*}
\gamma=
\begin{cases}2& \text{if $\alpha\in\{1\}\cup[2,\infty)$,}
\\ \alpha &\text{if $\alpha\in(1,2)$.}
\end{cases}
\end{equation*}
\end{corollary}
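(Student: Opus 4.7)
The plan is to decompose $R(u_{\rm app}^\e)$ via the triangle inequality in $W(\R^d)$ and estimate each constituent using the three bounds already in place: the estimate \eqref{estuappX2} on $X_2$, the estimate \eqref{estY} on $Y$, and Proposition \ref{lem:remainder} on $Y_R$. The only time-dependence comes through $\|A(t,\cdot)\|_\calA$, $\|\nabla A(t,\cdot)\|_\calA$ and $\|\Delta A(t,\cdot)\|_\calA$, so the first step is to pass to a time-uniform bound.

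First I would invoke Lemma \ref{lem:amp}: under the hypothesis $(\partial^p_x a_j)_{j\in\Gamma}\in\calA(\R^d)$ for $|p|\le 2$, the map $t\mapsto (\partial^p_x A_j(t,\cdot))_{j\in\Gamma}$ is continuous into $\calA(\R^d)$ for $|p|\le 2$. Hence, for any fixed $T>0$, there exist finite constants $M_0(T),M_1(T),M_2(T)$ with
\begin{equation*}
\sup_{t\in[0,T]}\|A(t,\cdot)\|_\calA\le M_0,\quad
\sup_{t\in[0,T]}\|\nabla A(t,\cdot)\|_\calA\le M_1,\quad
\sup_{t\in[0,T]}\|\Delta A(t,\cdot)\|_\calA\le M_2.
\end{equation*}
(For $\alpha>1$ one even has $M_p=\|(\partial^p_x a_j)\|_\calA$, independent of $t$, by translation invariance of the Fourier norm; for $\alpha=1$ the bound comes from continuity on the compact interval $[0,T]$.)

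Next I would split into the two cases prescribed by \eqref{remainder}. In the case $\alpha=1$, we have $R(u_{\rm app}^\e)=\e^2 X_2+\e Y_R$, so \eqref{estuappX2} and \eqref{Ybound} give
\begin{equation*}
\|R(u_{\rm app}^\e)\|_W\le \tfrac12\e^2 M_2+\e^2 C_K M_0^2(M_0+M_1),
\end{equation*}
which is $O(\e^2)$, yielding $\gamma=2$. In the case $\alpha>1$, we have $R(u_{\rm app}^\e)=\e^2 X_2+\e^\alpha(Y+Y_R)$, and combining \eqref{estuappX2}, \eqref{estY} and \eqref{Ybound} gives
\begin{equation*}
\|R(u_{\rm app}^\e)\|_W\le \tfrac12\e^2 M_2+\e^\alpha\|\widehat K\|_{L^\infty}M_0^3+\e^{\alpha+1}C_K M_0^2(M_0+M_1).
\end{equation*}
The three exponents are $2$, $\alpha$, and $\alpha+1$; their minimum is $2$ when $\alpha\ge 2$ and $\alpha$ when $\alpha\in(1,2)$, matching the definition of $\gamma$. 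Taking the supremum over $t\in[0,T]$ and collecting all constants into a single $C_R(T)$ yields \eqref{estuappR}.

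There is no real obstacle here: the heavy analytical work has already been carried out in Proposition \ref{lem:remainder} (the stationary-phase type partial integration that promotes $Y_R$ from formal order $O(1)$ to $O(\eps)$) and in Lemma \ref{lem:amp} (propagation of $\calA$-regularity up to order two, which is precisely what is needed to bound $X_2$ and $\nabla A$). The corollary itself is a bookkeeping step — one only has to be attentive to the fact that the smallness gained from integrating by parts in $Y_R$ is exactly one power of $\eps$, so that the $\eps^\alpha Y_R$ contribution in the $\alpha>1$ case becomes $O(\eps^{\alpha+1})$ and is therefore dominated by either the $O(\eps^2)$ contribution of $X_2$ or the $O(\eps^\alpha)$ contribution of $Y$, depending on the range of $\alpha$.
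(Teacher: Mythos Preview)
Your proposal is correct and follows essentially the same approach as the paper: invoke Lemma~\ref{lem:amp} to obtain time-uniform bounds on $\|A\|_\calA$, $\|\nabla A\|_\calA$, $\|\Delta A\|_\calA$ over $[0,T]$, then combine \eqref{estuappX2}, \eqref{estY}, and \eqref{Ybound} with the definition \eqref{remainder}. Your version merely spells out the case distinction and the $\e$-exponent comparison in more detail than the paper does.
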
 
\begin{proof}
Lemma \ref{lem:amp} guarantees the existence of the norms 
$\|A(t,\cdot)\|_\calA$, $\|\nabla A(t,\cdot)\|_\calA$, 
$\|\Delta A(t,\cdot)\|_\calA<\infty$ for all $t\in[0,\infty)$,
which are independent of $\e>0$ and continuous in $t$. Hence, taking their maximum over $t\in[0,T]$, 
we obtain 
\eqref{estuappR}
from \eqref{estuappX2}, \eqref{estY}, \eqref{Ybound} 
and the definition \eqref{remainder} of $R(u_{\rm app}^\e)$.
\end{proof}

With the estimate of Corollary \ref{cor:estuappR} on 
$R(u_{\rm app}^\e)$ at hand, 
we can finally state the proof of our main theorem, which follows the
basic ideas established in \cite{KiScMi92} for justifying the
nonlinear Schr\"odinger equation as a modulation equation for
dispersive waves.
\begin{proof}[Proof of Theorem \ref{theorem}] 
We consider a fixed $T>0$ and introduce the following scaled error $\r$
between the original solution $\u$ to \eqref{ivp} 
subject to the initial data \eqref{data} 
and the approximation \eqref{approx}:
$$\e^\beta\r:=\u{-}\uapp,$$
with a parameter $\beta>0$ to be specified below.  Hence, $\r(0)=0$.
From \eqref{uappagain} and Lemma \ref{lem:amp} we know that there
exists a constant $C_A>0$, independent of $\e$, such that $\|
u^\e_{\rm app} (t,\cdot) \|_W \le C_A$, for all $t\in [0,T]$.  Since
$\r(0)=0$, it follows $\|\u_0\|_W\le C_A$.  Consequently, for any
$D>C_A$, Lemma \ref{lem:existsol} yields the existence of a unique
solution $\u\in C([0,T_0],W(\R^d))$ for some $T_0>0$ with
$\|\u(t)\|_W\le D$ for $t \in [0,T_0]$.

Moreover, from \eqref{ivp} and \eqref{eq:approx} it follows 
that $\r$ satisfies
\begin{equation*}
i\e\d_t\r +\frac{\e^2}{2}\Delta\r
=\e^{\alpha-\beta}\big(\calM(\uapp+\e^\beta\r)-\calM(\uapp)\big)
-\e^{-\beta}R(\uapp) 
\end{equation*}
with $\calM(u)=(K\ast|u|^{2})u$ for $t\le\tau:=\min\{T_0,T\}$, and by
Duhamel's formula and Lemma~\ref{lem:propWiener} iii.  we obtain
\begin{equation}
\begin{split}\label{voc}
\|\r(t)\|_W\le 
&\ \e^{\alpha-\beta-1}\int_0^t 
\|\calM(\uapp(\tau)+\e^\beta\r(\tau))-\calM(\uapp(\tau))\|_W \,d\tau 
\\
&+\e^{-\beta-1}\int_0^t\|R(\uapp(\tau))\|_W\,d\tau,
\end{split}
\end{equation}
for all $t\le \tau$.
Writing 
$$\calM(u+r)-\calM(u)=\big(K\ast(u\bar r+\bar u r+|r|^{2})\big)(u+r)
+(K\ast|u|^2)r , $$
the estimate 
\eqref{mainestimate} gives
\begin{equation}\label{estMgen}
\|\calM(u+r)-\calM(u)\|_W
\le\|\widehat K\|_{L^\infty}(3\|u\|_W^2+3\|u\|_W\|r\|_W+\|r\|_W^2)\,\|r\|_W.
\end{equation}
Hence, replacing $u=\uapp$ and $r=\e^\beta\r$, and recalling
$\beta>0$, we obtain for any $C>0$ and $\e_0\in(0,1]$, such that
$3\e_0^\beta C_AC+\e_0^{2\beta} C^2 =C_A^2$, that
\begin{equation}\label{estM}
\|\calM(\uapp+\e^\beta\r)-\calM(\uapp)\|_W
\le \e^\beta C_\calM\,\|\r\|_W
\quad\forall\ \e\le \e_0,\ t\le\tau
\end{equation}
where $C_\calM:=4\|\widehat K\|_{L^\infty}C_A^2 $, as long as
$\|\r\|_W\le C$.

Inserting the bounds \eqref{estM} and \eqref{estuappR} 
into \eqref{voc}, 
and recalling that $\e_0\le 1$, $\alpha\ge 1$, $\tau\le T$,
we consequently obtain for 
$\beta\in(0,\gamma{-}1]$
\begin{equation*}
\|\r(t)\|_W\le C_R T
+C_\calM \int_0^t\|\r(\tau)\|_W\,d\tau 
\quad\forall\ \e\le \e_0,\ t\le\tau.
\end{equation*}
By Gronwall's lemma this yields
\begin{equation*}
\|\r(t)\|_W\le C_R T e^{C_\calM t} 
\quad\forall\ \e\le \e_0,\ t\le\tau.
\end{equation*}
Hence, setting above $C:=C_R T e^{C_\calM T}$ and $D:=C+C_A$ this
estimate guarantees that the solution $\u$ exists on the whole time
interval $[0,T]$, cf.\ the proof of Lemma \ref{lem:existsol}.
Moreover, recalling Lemma \ref{lem:propWiener} i, we finally obtain
the error estimate \eqref{errorestimate} which finishes the proof.
\end{proof}

\medskip 

\textbf{Acknowledgement.} The authors want to thank Robert Schubert for pointing out to them the alternative approach described in Remark 2.2.

\bibliographystyle{amsplain}

\end{document}